\newtheorem{thm}{Theorem}[section]
\newtheorem{cor}[thm]{Corollary}
\newtheorem{lem}[thm]{Lemma}
\newtheorem{conj}[thm]{Conjecture}
\newtheorem{defn}[thm]{Definition}
\newcommand{\idq}{{}^{\alpha}_{\omega}}
\title{Packing Independent Cliques in $K_4$-minor-free Graphs 
}
\author{Benjamin Xiao\thanks{Central Magnet School, Murfreesboro, TN 37132.} \ and \  Dong Ye\thanks{Department of Mathematics, Middle Tennessee State University, Murfreesboro, TN 37132.} }
\date{}
\begin{document}
\maketitle

\begin{abstract}
Let $G$ be a graph and $S$ be a set of cliques of $G$. The set $S$ is an indeque set if every component of $G[S]$, the subgraph induced by vertices of $S$, is a clique. In this paper, we prove that the indeque ratio of $K_4$-minor-free graphs is $\frac 1 2$, which settle two conjectures of Biro, Collado and Zamora. 
We also show that the indeque ratio of subcubic graphs is $\frac 1 2$. 
\end{abstract}

\section{Introduction}
All graphs considered in this paper are simple. A graph is {\em complete} if any two vertices of the graph are adjacent. A complete graph on $n$-vertex is denoted by $K_n$. 
 Let $G$ be a graph. A {\em clique} of $G$ is a complete subgraph. Let $S$ be a set of cliques of $G$, and let $G[S]$ denote the subgraph of $G$ induced by all vertices of $S$. 
A set $S$ of cliques is {\em independent} if every component of $G[S]$ is a clique, which is also called an {\em indeque set}. The {\em indeque number} of $G$ is defined as the maximum size of indeque sets, denoted by
\[\idq(G)= \max\{|S| : S\subseteq V (G) \mbox{ and } G[S]\mbox{ is a union of independent cliques}\}.\]

An indeque set of $G$ is also called an {\em independent set} or {\em stable set} if every clique in $S$ is a single vertex (i.e. $K_1$). The {\em independent number} of a graph $G$ is the cardinality of the largest independent set of $G$, denoted by $\alpha(G)$. The {\em clique number} of a graph $G$ is the cardinality of the largest clique of $G$, denoted by $\omega(G)$. As noted in~\cite{BCZ}, it holds that 
\[\max\{\alpha(G), \omega(G)\}\le\, \idq(G)\le \alpha(G)\cdot \omega(G).\] 
It is well-known that it is NP-complete to determine the  independent number of a planar graph (cf. \cite{AHU}). From Four-Color Theorem~\cite{RT}, every $n$-vertex plane graph $G$ has an independent set of size at least $n/4$, which implies $\idq(G)\ge n/4$. 
For an $n$-vertex plane graph $G$, Biro, Collado and Zamora~\cite{BCZ} show $\idq(G) \ge 4n/15$ for an $n$-vertex planar graph, and further conjectured that $\idq(G)\ge 2n/5$ that is sharp if it holds. 

 An indeque set $S$ is called an {\em induced matching} if every clique is a single edge (i.e. $K_2$). The maximum size of an induced matching is denoted by $im(G)$. Hence it holds that $im(G)\le \idq(G)$.  Kanj et. al.~\cite{KPSX} proved that a 2-connected outerplanar graph $G$ with minimum degree two has a induced matching of size at least $n/2$, which implies that such an outerplanar graph $G$ has $\idq(G)\ge n/2$. It is NP-complete to determine $im(G)$ for planar graphs with maximum degree four~\cite{KS}, but the maximum induced matching problem can be solved in polynomial time for outerplanar graphs  (cf.~\cite{KK}). In general, it is hard to determine the indeque number of a graph even for planar graphs. For computational discussions, please refer to~\cite{ELWB}.

\begin{defn}
    Let $\mathcal G$ denote a class of graphs. Define $$\idq(\mathcal G, n) = \min\{\idq(G) : G \in \mathcal G \mbox { and } |G| = n\}.$$
    Then, the \textit{indeque ratio} of $\mathcal G$ is defined to be $$\idq(\mathcal G) = \lim_{n \to \infty}\frac{\idq(\mathcal G, n)}{n}.$$
\end{defn}

For a given class of graphs $\mathcal G$, in order to find $\idq(\mathcal G)$, it needs to find a sharp lower bound of indeque number for this class of graphs. Salia et. al.~\cite{SSTZ} obtained that $\idq(\mathcal G, n)=(n+o(n))/\log_2(n)$ for the class $\mathcal G$ of comparability graphs of $n$-element posets with acyclic cover graph, which implies $\idq(\mathcal G)=0$. The indeque ratio has been studied for the class of plane graphs by Biro, Collado and Zamora~\cite{BCZ}, who prove that the indeque ratio of the class of forests as stated below.

\begin{thm}[Biro, Collado and Zamora, \cite{BCZ}]\label{thm:forest}
    Let $\mathcal F$ be the class of forests. Then 
    \[\idq(\mathcal F)=\frac 2 3.\]
\end{thm}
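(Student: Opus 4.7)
The plan is to establish matching bounds $\idq(\mathcal F)\le 2/3$ and $\idq(\mathcal F)\ge 2/3$. For the upper bound, I take $F_k$ to be the disjoint union of $k$ copies of $P_3$. Direct inspection gives $\idq(P_3)=2$: either the two endpoints are chosen (two $K_1$ components) or the single edge is chosen (one $K_2$ component), but all three vertices induce $P_3$, which is not a disjoint union of cliques. Since the indeque number is additive over connected components, $\idq(F_k)=2k=\tfrac{2n}{3}$ where $n=3k$. Padding with a bounded number of isolated vertices handles $n$ not divisible by $3$, so $\idq(\mathcal F,n)/n\to 2/3$, proving $\idq(\mathcal F)\le 2/3$.

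For the lower bound, the crucial observation is that a forest is triangle-free, so every clique is $K_1$ or $K_2$, and hence an indeque set is precisely a vertex set $S$ with $\Delta(G[S])\le 1$ (a \emph{dissociation set}). I then prove by induction on $n$ that $\idq(G)\ge 2n/3$ for every forest $G$ on $n$ vertices; additivity over components lets me focus on a single tree $T$, and the cases $n\le 2$ are immediate. For $n\ge 3$, let $P=v_0v_1\cdots v_k$ be a longest path in $T$; then $v_0$ is a leaf and every neighbor of $v_1$ other than $v_2$ must also be a leaf, otherwise $P$ could be extended. If $\deg_T(v_1)=2$, I remove $\{v_0,v_1,v_2\}$ and apply induction to obtain an indeque set $S'$ in $T-\{v_0,v_1,v_2\}$ with $|S'|\ge \tfrac{2(n-3)}{3}$; then $S'\cup\{v_0,v_1\}$ is an indeque set of $T$, with $\{v_0,v_1\}$ forming an isolated $K_2$ component, of size at least $2n/3$. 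If $\deg_T(v_1)\ge 3$, let $w\ne v_0$ be another leaf neighbor of $v_1$; removing $\{v_0,v_1,w\}$ and taking $S'\cup\{v_0,w\}$, where $v_0$ and $w$ stand as isolated $K_1$ components, again yields $|S|\ge 2n/3$.

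The main obstacle is confirming that the two cases are exhaustive and that no forbidden edges are created between $S'$ and the added vertices. Both points rest on the longest-path choice: every non-$v_2$ neighbor of $v_1$ must be a leaf, so after deleting $v_1$ these become isolated, and each leaf $v_0$ (and $w$ in the second case) has its unique neighbor $v_1$ already deleted. The only mild subtlety is that $T-\{v_0,v_1,v_2\}$ is generally a forest rather than a tree, but the inductive statement is phrased for forests precisely to accommodate this. Together with the matching construction, this yields $\idq(\mathcal F)=2/3$.
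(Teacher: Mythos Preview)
Your proof is correct, but there is nothing in the paper to compare it against: Theorem~\ref{thm:forest} is stated here only as a cited result of Biro, Collado and Zamora~\cite{BCZ}, with no proof given. The paper uses it solely as background before moving on to its own contribution (the $K_4$-minor-free case).

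Your argument stands on its own. The upper-bound construction via disjoint copies of $P_3$ is the natural extremal example, and the lower bound is a clean longest-path induction that in each step peels off three vertices while salvaging two of them for the indeque set. The two cases (whether $v_1$ has degree exactly $2$ or at least $3$) are indeed exhaustive for $n\ge 3$, and your check that the added vertices create no unwanted edges with $S'$ is correct because their only neighbor $v_1$ has been deleted. The one place to be slightly more careful in writing is the base of the induction and the fact that the bound $\idq(G)\ge 2n/3$ is being asserted as a real-number inequality (so no ceiling is needed), but you already handle $n\le 2$ explicitly and the inductive step removes exactly three vertices, so the arithmetic is exact.
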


Biro, Collado and Zamora~\cite{BCZ} also proved the indeque ration of the class of graphs with path-width at most two is $1/2$, and they further made the following conjectures about the indeque ratio of the classes of series-parallel and outerplanar graphs. 

\begin{conj}[Biro, Collado and Zamora, \cite{BCZ}]\label{conj1}
    Let $\mathcal S$ be the class of series-parallel graphs. Then 
    \[\idq(\mathcal S)=\frac 1 2.\]
\end{conj}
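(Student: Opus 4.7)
The plan is to prove Conjecture~\ref{conj1} by establishing the matching inequalities $\idq(\mathcal S) \le 1/2$ and $\idq(\mathcal S) \ge 1/2$.

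For the upper bound I would exhibit a series-parallel graph realizing ratio $1/2$. Take $G_n$ to be the disjoint union of $n/4$ copies of $C_4$. Each $C_4$ is series-parallel, and $\idq(C_4)=2$ since every three vertices of $C_4$ induce a $P_3$, which is not a disjoint union of cliques, whereas two antipodal vertices form an independent set of size two. Hence $\idq(G_n) = (n/4) \cdot 2 = n/2$, and letting $n \to \infty$ through multiples of four forces $\idq(\mathcal S) \le 1/2$.

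For the lower bound, my plan is to prove the stronger statement that every $K_4$-minor-free graph on $n$ vertices satisfies $\idq(G) \ge n/2$; this suffices because every series-parallel graph is $K_4$-minor-free. I would argue by induction on $n$, exploiting the well-known fact that $K_4$-minor-free graphs are $2$-degenerate, so $G$ contains a vertex $v$ with $\deg(v) \le 2$. If $\deg(v) \le 1$, a direct reduction to $G - v$ (or to $G - N[v]$) combined with a small clique containing $v$ closes the step. If $\deg(v) = 2$ with neighbors $u_1$ and $u_2$, the analysis splits on whether $u_1 u_2 \in E(G)$: in the triangle case $\{v, u_1, u_2\}$ is the candidate clique, and otherwise an edge $\{v, u_1\}$ or the single vertex $v$ is.

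The main obstacle is the degree-$2$ case with non-adjacent neighbors. A naive deletion of $\{v, u_1, u_2\}$ followed by induction on the remainder and re-addition of a local clique does not work, because $u_1$ or $u_2$ may be adjacent to the inductive indeque set in $G \setminus \{v, u_1, u_2\}$, merging components and destroying the cliquish-component property. My intended remedy is to strengthen the inductive hypothesis to a rooted version tracking the state of a small set of boundary vertices. Concretely, I would carry out the induction along a width-$2$ tree decomposition of $G$, maintaining for each subtree rooted at a bag $B$ a table of bounds on the largest indeque set of the corresponding subgraph, indexed by the possible states of $B$: each vertex of $B$ being either outside the indeque set, in it as a singleton component, or in it within a clique component together with other specified vertices of $B$. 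Verifying that each join operation propagates the invariant $\idq \ge |V|/2$ reduces to a finite but delicate case analysis, and that is precisely where the heart of the argument lies; an alternative I would pursue in parallel is a direct discharging scheme that assigns each vertex a charge of $1/2$ and reallocates along the $2$-degeneracy ordering so that each included clique collects charge equal to its size.
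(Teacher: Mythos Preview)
Your upper bound via disjoint $C_4$'s is exactly what the paper does, and your reduction of the lower bound to the statement ``every $K_4$-minor-free $G$ satisfies $\idq(G)\ge |V(G)|/2$'' is also the paper's route (this is their Theorem~\ref{thm:lower}). So the global architecture matches.

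The gap is in the execution of the lower bound. You correctly isolate the degree-$2$ vertex with non-adjacent neighbours as the obstruction, but then you offer two programmes (a rooted DP over a width-$2$ tree decomposition, and a discharging scheme) without supplying the content of either. For the DP, ``propagating the invariant $\idq\ge |V|/2$ through joins'' is not an inductive statement one can check transition-by-transition: the quantity $\idq$ of a subtree is not additive across a shared bag, and you would need a carefully chosen vector of lower bounds indexed by boundary states together with inequalities showing the vector is preserved under introduce, forget, and join nodes. You neither specify those quantities nor argue why the inequalities close up, and this is not a routine bookkeeping matter---it is the whole proof. Likewise, for discharging you give no rules and no indication of what local configurations are reducible. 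As written, the proposal names the difficulty but does not overcome it.

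For comparison, the paper avoids both DP and discharging. It runs a minimum-counterexample argument on a leaf-block $H$ (necessarily series-parallel), and develops a structural hierarchy of pieces of $H$: $0$-series-pieces (short paths), $0$-parallel-pieces (forced to be triangles or one of six small configurations $\Gamma_1,\dots,\Gamma_6$ around the cut-vertex, Lemma~\ref{lem:0-piece}), and then \emph{triangle-strings}, \emph{triangle-rings}, and \emph{kites}. Two short claims show that any triangle-ring or suitably placed kite yields a ``contra-pair'' $(X,S)$---a subgraph $X$ with an indeque set $S\subseteq V(X)$ of size at least $|V(X)|/2$ having no edges to $G-X$---which immediately contradicts minimality. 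The residual cases reduce to the $\Gamma_i$ configurations and are dispatched by explicit choices of $S$. The point is that the paper replaces your unspecified finite case analysis by a concrete list of reducible configurations tailored to the series-parallel structure; if you want to salvage your DP plan, you will need an equally explicit set of state inequalities, and if you want discharging, you will need to identify those same reducible configurations anyway.
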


\begin{conj}[Biro, Collado and Zamora, \cite{BCZ}]\label{conj2}
    Let $\mathcal O$ be the class of outerplanar graphs. Then
    \[\idq(\mathcal O)=\frac 1 2.\]
\end{conj}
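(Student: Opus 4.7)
The plan is to prove the conjecture via the broader class of $K_4$-minor-free graphs, since every outerplanar graph is $K_4$-minor-free. For the upper bound $\idq(\mathcal O) \leq 1/2$, the disjoint union of $m$ copies of $C_4$ suffices: a direct check gives $\idq(C_4) = 2$, so this outerplanar family attains indeque number exactly $n/2$ on $n = 4m$ vertices. The substantive task is the matching lower bound $\idq(G) \geq \lceil n/2 \rceil$ for every outerplanar $G$.

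I would prove the lower bound by induction on $n$, first reducing to the 2-connected case via the block-cut tree. Low-degree reductions are clean: if $\deg(v) = 0$, add $v$ and induct on $G - v$; if $\deg(v) = 1$ with neighbor $u$, include $v$, exclude $u$, and induct on $G - \{u,v\}$, which is a 2-for-1 trade matching the critical ratio. When $G$ is 2-connected with minimum degree $2$, I would use the structural fact that the outer boundary is a Hamilton cycle $v_1 \cdots v_n$ with non-crossing chords, so there is a smallest chord-bounded face (an ``innermost chord'') $v_i v_j$ whose enclosed arc $v_{i+1}, \ldots, v_{j-1}$ consists of vertices of degree exactly $2$ in $G$.

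If $j - i \geq 3$, this innermost face yields a run of at least two consecutive degree-$2$ vertices whose external neighbors are non-adjacent, so taking an edge of the run as a $K_2$-clique and excluding its two external neighbors gives a clean 2-for-4 reduction at the critical ratio. If $j - i = 2$, the face is a triangle $\{v_i, v_{i+1}, v_{i+2}\}$ with chord $v_i v_{i+2}$; here I would include the triangle as a $K_3$-clique and carefully account for the excluded external neighbors of $v_i$ and $v_{i+2}$ (the cycle vertices $v_{i-1}, v_{i+3}$, plus any additional chord endpoints at $v_i$ or $v_{i+2}$). When $\deg(v_i), \deg(v_{i+2}) \leq 4$, the $3$-for-at-most-$7$ accounting still meets the ratio $1/2$.

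The main obstacle is the triangular-face case when $v_i$ or $v_{i+2}$ carries many further chords: each extra chord endpoint forces one more exclusion, and a naive argument can drop the gain below $n/2$. To repair this I expect to need either a strengthened inductive hypothesis—for instance, proving that an indeque set of size $\lceil n/2 \rceil$ can be chosen to avoid a prescribed outer vertex, which supplies slack at chord endpoints when triangles are nested—or a more global argument that processes the chord tree from the outside in, bundling reductions over stacked triangular faces instead of doing them one at a time. Once this accounting is sharpened, the argument should extend verbatim to all $K_4$-minor-free graphs, simultaneously settling both conjectures.
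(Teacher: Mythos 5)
Your upper-bound construction (disjoint $C_4$'s) and your low-degree reductions (pendant vertex; two adjacent degree-$2$ vertices giving a $2$-for-$4$ trade) match what the paper does, but the proposal has a genuine gap, and it is exactly the one you flag yourself: the case of a triangular inner face whose base vertices carry further chords. This is not a fringe case to be patched later; it is the heart of the problem. Consider a fan of stacked triangles sharing a base path (what the paper calls a triangle-string): there your ``include the innermost $K_3$, exclude the external neighbors'' move never meets the ratio, because the base vertices have unbounded degree, and the correct extremal choice is not a triangle at all but the independent set of all degree-$2$ apexes, which must be harvested globally over the whole stacked structure. The paper's proof of the lower bound is essentially a classification of how these structures can sit inside a leaf block (triangle-strings, triangle-rings, kites, and the finitely many configurations $\Gamma_1,\dots,\Gamma_6$ around the cut vertex), together with an explicit half-size indeque set for each; your two proposed repairs (a strengthened inductive hypothesis avoiding a prescribed outer vertex, or an outside-in processing of the chord tree) are plausible directions but neither is carried out, so the proof is incomplete precisely where the real work lies.

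Two further cautions. First, the reduction ``to the 2-connected case via the block-cut tree'' is not clean: blocks share cut vertices, so you cannot simply apply the inductive bound to each block and add; the paper instead always removes a subgraph $X$ together with an indeque set of size at least $|V(X)|/2$ none of whose vertices has a neighbor outside $X$ (a ``contra-pair''), and the interaction with the cut vertex is what forces the $\Gamma_i$ case analysis. Second, your closing claim that the argument ``extends verbatim to all $K_4$-minor-free graphs'' cannot be right as stated, since $2$-connected series-parallel graphs (e.g.\ $K_{2,3}$) need not have a Hamiltonian outer cycle with non-crossing chords; the paper works in the other direction, proving the bound for $K_4$-minor-free graphs via series-parallel decompositions and deducing the outerplanar case as a corollary. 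For Conjecture~\ref{conj2} alone your outerplanar-specific setup is legitimate, but the triangular-face accounting must be completed before it constitutes a proof.
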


As pointed out in~\cite{BCZ}, the indeque ratio of outerplanar graphs is at least $4/9$ which follows from the above theorem and a result of Hosono on maximum induced forests of outerplanar graphs~\cite{Ho}.  It follows directly from induced matching result of Kanj et. al.~\cite{KPSX} on outerplanar graphs that the indeque ratio for 2-connected outerplanar graphs with a minimum degree at least two is 1/2.

In this paper, we study the indeque ratio of $K_4$-minor free graphs. It is well-known that the $K_4$-minor-free graphs form a subclass of plane graphs. Along with this, Wald and Colbourn~\cite{WC} proved that $K_4$-minor-free graphs are equivalent to graphs with tree-width at most $2$. Hence the class of $K_4$-minor free graphs also include graphs with path-width at most two as a subclass. Both series-parallel graphs and outerplanar graphs are $K_4$-minor-free graphs. Every block of a $K_4$-minor-free graph is a series-parallel graph. The following is our main result.

\begin{thm}\label{thm:main}
Let $\mathcal K$ be the class of $K_4$-minor free graphs. Then
\[\idq(\mathcal K)=\frac 1 2 .\]
\end{thm}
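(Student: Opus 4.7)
The plan is to exhibit an explicit extremal family showing $\idq(\mathcal K)\le 1/2$. Let $G_k$ be the disjoint union of $k$ copies of $C_4$, so $n=4k$; each $C_4$ is outerplanar and therefore $K_4$-minor-free. A direct case check gives $\idq(C_4)=2$: any three vertices of $C_4$ induce a $P_3$ whose single component is not a clique, and the four vertices induce $C_4$ itself, which is also not a clique, while a pair of non-adjacent vertices already realizes the value $2$. Summing over components, $\idq(G_k)=2k=n/2$, proving the upper bound.

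\paragraph{Lower bound.} I would prove by strong induction on $n$ that $\idq(G)\ge n/2$ for every $K_4$-minor-free graph $G$ on $n$ vertices, with trivial base cases for small $n$. First I would reduce to the 2-connected case via block decomposition: if $G$ has a cut vertex $v$ separating $G$ into $K_4$-minor-free pieces $G_1,G_2$ sharing only $v$, I take inductive indeque sets $S_i$ of $G_i$ and merge them at $v$. The merge is delicate only when $v\in S_1\cap S_2$ lies in a nontrivial clique on each side, because those cliques then fuse into a single, possibly non-clique, component; however, every clique in a $K_4$-minor-free graph has size at most $3$ (as $K_4$ itself is forbidden), which bounds the damage, and a short local swap at $v$ repairs the indeque property while losing at most one vertex across the two blocks.

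\paragraph{The 2-connected step and the main obstacle.} For 2-connected $K_4$-minor-free $G$ with $n\ge 3$, the tree-width-$2$ structure guarantees a vertex $v$ of degree exactly $2$; write $N(v)=\{u,w\}$. I split on whether $uw\in E(G)$. If $uw\in E(G)$, the triangle $T=\{u,v,w\}$ is a candidate $K_3$ for the indeque set, and I recurse on the $K_4$-minor-free graph $G-T$ on $n-3$ vertices, checking that $T$ is isolated from the recursed indeque set or else can be restored by a boundary swap. If $uw\notin E(G)$, I pass to the series-reduced graph $G'=(G-v)+uw$, which is again $K_4$-minor-free on $n-1$ vertices; an inductive indeque set $S'$ of $G'$ then extends to an indeque set of $G$ of size $|S'|+1$ by adding $v$ (as a $K_1$, or attached to $u$ or $w$ to form a $K_2$), unless both $u$ and $w$ already sit in nontrivial cliques of $S'$. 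The main obstacle is precisely this case: when a neighbor of $v$ occupies a triangle of the inductive indeque set, naively adding $v$ destroys the clique property, and a single swap can leave the size unchanged rather than increased. I expect the resolution to require strengthening the inductive hypothesis to track the clique membership of a few designated boundary vertices, or to simultaneously reduce along two degree-$2$ vertices, so that chains of $C_4$'s emerge as the tight cases and any departure from them produces the slack needed to reach $n/2$.
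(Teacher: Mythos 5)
Your upper bound is correct and is exactly the paper's: disjoint unions of $C_4$'s give $\idq(\mathcal K)\le \frac12$. The lower bound, however, has a genuine gap --- one you partly acknowledge yourself in the ``main obstacle'' paragraph, but which is fatal as written. Because the target bound $n/2$ is tight, the single-vertex reductions carry no slack: if $v$ has degree $2$ and you recurse on $G'=(G-v)+uw$ with $n-1$ vertices, you get $|S'|\ge (n-1)/2$, and you reach $n/2$ only if $v$ can actually be inserted; in the bad case where both $u$ and $w$ sit in nontrivial cliques of $S'$, you are stuck at $(n-1)/2$ and the induction does not close. The block-decomposition step has the same problem in a sharper form: with $n_1+n_2=n+1$, if the cut vertex $v$ lies in both $S_1$ and $S_2$ you get only $|S_1\cup S_2|\ge (n+1)/2-1=(n-1)/2$ \emph{before} any repair, and even when $v$ lies in just one $S_i$, vertices of the other set adjacent to $v$ can fuse two cliques into a non-clique component. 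Declaring that ``a short local swap repairs the indeque property while losing at most one vertex'' is precisely the step that needs a proof, and the arithmetic above shows it cannot succeed without gaining a vertex somewhere. Your closing sentence (``I expect the resolution to require strengthening the inductive hypothesis\dots'') is a statement of intent, not an argument.

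The paper closes exactly this gap by a different mechanism. It takes a minimum counterexample $G$ and shows it contains a \emph{contra-pair}: a subgraph $X$ together with an indeque set $S$ of $X$ with $|S|\ge \frac12|V(X)|$ such that \emph{no vertex of $S$ has a neighbor in $G-X$}. One then applies minimality to $G-X$ and takes a disjoint union of indeque sets, so the boundary-interaction problem you run into never arises. Finding such an $X$ requires removing whole structured subgraphs rather than single vertices: the paper analyzes a leaf-block (a series-parallel graph, by Duffin's theorem) into $0$-series-pieces and $0$-parallel-pieces, shows every such piece avoiding the cut vertex is a triangle or short path (Lemma~\ref{lem:0-piece}), and then shows the block is assembled from triangle-strings, triangle-rings, and kites, each of which yields a contra-pair. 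This structural classification --- in effect, your ``simultaneous reduction along many degree-$2$ vertices'' carried out in full --- is the missing content of your proposal.
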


Since $\mathcal O\subset \mathcal K$ and $\mathcal S\subset \mathcal K$, it follows that $\idq(\mathcal K)\le\idq(\mathcal O)$ and $\idq(\mathcal K)\le \idq(\mathcal S)$. Note that the $4$-cycle $C_4$ is a series-parallel and outerplanar graph which has $\idq(C_4)=2$. The union of disjoint $C_4$s shows that $\idq(\mathcal O)\le 1/2$ and $\idq(\mathcal S)\le 1/2$. Hence, Theorem~\ref{thm:main} settles Conjectures~\ref{conj1} and~\ref{conj2}. 
In the last section, we show that the class of subcubic graphs also has indeque ratio $1/2$. 


\section{$K_4$-minor-free graphs}

A graph is $K_4$-minor-free if it does not contain $K_4$ as a minor. It is well known that the $K_4$-minor-free graph contains two important subfamily of graphs: one is the family of outer-planar graphs and the other one is the family of series-parallel graphs. Duffin~\cite{D} revealed the connection between the $K_4$-minor-free graphs and the series-parallel graphs as follows.

\begin{thm}[Duffin~\cite{D}]\label{thm:parallel}
    A graph without cut-vertex is $K_4$-minor-free graph if and only if it is a series-parallel graph. 
\end{thm}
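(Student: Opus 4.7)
The plan is to prove both directions of the equivalence by induction, using the recursive construction of series--parallel graphs as a tree of series and parallel compositions starting from a single edge $K_2$.

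For the easy direction ($\Leftarrow$), I would argue that every series--parallel graph is $K_4$-minor-free by induction on the number of edges. The base case $K_2$ is trivial. For the inductive step, observe that a series composition produces a graph in which the glue-vertex is a cut vertex, and a parallel composition produces a graph in which the two glued terminals form a $2$-vertex separator. Since $K_4$ is $3$-connected, any realization of a $K_4$-minor (as four pairwise-adjacent connected branch sets) must lie entirely on one side of such a separator, modulo possibly adding the virtual edge between terminals; in either case the minor survives in one of the two building blocks, contradicting induction.

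The hard direction ($\Rightarrow$) proceeds by induction on $|V(G)|$ via the following key lemma: \emph{every $2$-connected $K_4$-minor-free graph on at least four vertices contains a vertex of degree~$2$.} Granted this lemma, pick such a vertex $v$ with neighbors $u,w$. If $uw \notin E(G)$, set $G' = (G-v) + uw$; if $uw \in E(G)$, set $G' = G-v$. A short argument shows $G'$ is $2$-connected (using the fact that $G$ was), and every $K_4$-minor of $G'$ lifts to one of $G$ (contracting $vw$ or $vu$ realizes the added edge $uw$, and in the second case a $K_4$-minor in $G-v$ is already a $K_4$-minor in $G$). By the inductive hypothesis $G'$ is series--parallel; recovering $G$ from $G'$ is either a series composition (subdividing the edge $uw$ by inserting $v$) or a parallel composition (placing the two-edge path $u v w$ in parallel with the existing edge $uw$). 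The base cases $K_2$ and $K_3$ are directly series--parallel.

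The main obstacle is the key lemma on the existence of a degree-$2$ vertex. My plan there is to assume $\delta(G) \geq 3$ and build a $K_4$-subdivision explicitly, which suffices because $K_4$ has maximum degree~$3$ and so a $K_4$-minor in such a graph is realizable as a topological $K_4$. Take a longest cycle $C$ in $G$ and a vertex $x$ on $C$ with $\deg(x) \geq 3$; the extra edge at $x$ starts an ear $P$ that reattaches to $C$ at some $y \neq x$. The two arcs of $C$ from $x$ to $y$, together with $P$, give three internally disjoint $x$--$y$ paths. Because $\delta \geq 3$, some internal vertex of one of these three paths has an additional neighbor $z$, and $2$-connectedness together with a careful case analysis on where $z$ attaches produces a fourth branch that, with appropriate contractions of path segments, yields a $K_4$-subdivision. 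Handling all attachment cases cleanly (and showing the construction really produces four pairwise-connected branch vertices rather than a theta-graph) is where the bulk of the technical work lies; the remainder of the argument is a routine induction once the lemma is in hand.
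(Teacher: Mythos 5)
The paper does not prove this statement at all: it is quoted as a known theorem of Duffin~\cite{D}, so there is no in-paper argument to measure you against. Your outline is the standard route to Duffin's characterization (induction in both directions, with the forward direction driven by the existence of a degree-$2$ vertex), and that architecture is sound. Two places, however, are genuinely incomplete as written.

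First, and most seriously, the entire content of the forward direction sits inside your key lemma (minimum degree $3$ forces a $K_4$-subdivision, which is Dirac's lemma), and your sketch of it does not yet work. Starting from a theta-subgraph with branch vertices $x,y$ and picking an internal vertex with an extra neighbor $z$, the dangerous cases are exactly the ones you wave at: the new ear may reattach at $x$ or $y$, or at another internal vertex of the \emph{same} $x$--$y$ path, and then you do not get four branch vertices --- you get a larger theta or a cycle with a non-crossing chord, not a $TK_4$. The standard fix is to choose the theta-subgraph $T$ minimizing $|E(T)|$ (so that any ear landing on $x$, $y$, or the same path as its origin yields a smaller theta, a contradiction), leaving only the case where the ear joins internal vertices of two different $x$--$y$ paths, which does give branch vertices $x,y,u,w$ of a $K_4$-subdivision. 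Without some such extremal choice the ``careful case analysis'' you defer is not routine, and since this lemma is the whole theorem, it cannot be left as a remark.

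Second, in the easy direction your induction as stated can fail to decrease. After splitting a parallel composition along the separator $\{s,t\}$, the $K_4$-minor survives in $G_i$ \emph{plus the virtual edge} $st$; if the other side $G_{3-i}$ is a single edge, then $G_i+st=G$ and induction on the number of edges makes no progress. The clean repair is to strengthen the inductive statement to ``for every $2$-terminal series-parallel graph $(G,s,t)$, the graph $G+st$ has no $K_4$-minor'' and to induct on the number of composition operations; the series case then also needs the observation that suppressing a degree-$2$ vertex preserves the presence or absence of a $K_4$-minor. Your reduction of $G$ to $G'$ in the forward direction (deleting the degree-$2$ vertex, adding $uw$, checking $2$-connectivity and that the minor lifts, and rebuilding $G$ by a series/parallel step at the edge $uw$ using Lemma~\ref{lem:terminal}) is correct as sketched.
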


Hence, every block, defined as a maximal 2-connected subgraph, of a $K_4$-minor free graph is a series-parallel graph. The definition of a series-parallel graph is given as follows. 
A {\em 2-terminal graph} $G$ is a graph with two special vertices, called {\em source vertex $s$} and {\em sink vertex $t$}, denoted by $(G,s,t)$. A {\em series composition} of two 2-terminal graphs $(G_1,s_1,t_1)$ and $(G_2,s_2,t_2)$ is a new 2-terminal graph formed by $G_1$ and $G_2$ by identifying $t_1=s_2$ with source vertex $s_1$ and sink vertex $t_2$, denoted by $(G_1\bullet G_2, s_1, t_2)$. A {\em parallel composition} of two 2-terminal graphs $(G_1,s_1,t_1)$ and $(G_2,s_2,t_2)$ is a new 2-terminal graph formed by $(G_1, s_1,t_1)$ and $(G_2, s_2, t_2)$ by identifying $s_1$ and $s_2$ as the new source vertex $s$ and identifying $t_1$ and $t_2$ as the new sink vertex $t$, denoted by $(G_1\|G_2, s, t)$. A  graph is called a {\em series-parallel graph} if it can be constructed from $K_2$ by series and parallel compositions. A series-parallel graph can be recognized in linear time \cite{VTL}.

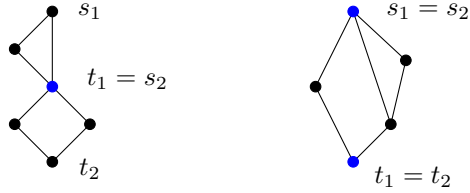
\begin{figure}[!htb]
\begin{center}
\begin{tikzpicture}
    \draw [](0,1) -- (-.5, .5)--(0,0)--(0,1);
    \draw [](0,0) -- (-.5,-.5)--(0,-1)--(.5, -.5)--(0,0);
    \filldraw[blue](0,0) circle (2pt);
    \filldraw[](0,1) circle (2pt);
    \filldraw[](0,-1) circle (2pt);
    \filldraw[](-.5,-.5) circle (2pt);
    \filldraw[] (.5,-.5) circle (2pt);
    \filldraw[] (-.5,.5) circle (2pt);
    \node at (1,0.1) {$t_1=s_2$};
    \node at (.5,1) {$s_1$ };
    \node at (.5,-1.1) {$t_2$ };

    \draw[](4,1) -- (3.5, 0) -- (4,-1);
    \draw[](4.5, -0.5)-- (4,1) -- (4.7, 0.35) -- (4.5, -0.5) -- (4,-1);
    \filldraw[blue](4,1) circle (2pt);
    \filldraw[](3.5,0) circle (2pt);
    \filldraw[blue](4,-1) circle (2pt);
    \filldraw[](4.7,0.35) circle (2pt);
    \filldraw[](4.5,-0.5) circle (2pt);

    \node at (5,1) {$s_1=s_2$};
    \node at (4.8, -1.2) {$t_1=t_2$};
\end{tikzpicture}
\caption{A series composition (Left) and a parallel composition (Right).}\label{fig1}
\end{center}
\end{figure}   

The following is the main result that we are going to prove.

\begin{thm}\label{thm:lower}
    Let $G$ be a $K_4$-minor-free graph. Then
    \[\idq(G)\ge  \frac 1 2 |V(G)|.\]
\end{thm}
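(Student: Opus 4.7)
The plan is to prove Theorem~\ref{thm:lower} by strong induction on $n=|V(G)|$. The bases $n\le 3$ are immediate by direct inspection. For the inductive step, assume the bound holds for all $K_4$-minor-free graphs on fewer than $n$ vertices, and let $G$ be $K_4$-minor-free with $n\ge 4$.

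If $G$ is disconnected I apply the hypothesis to each component and take the union, which is indeque in $G$ with total size at least $n/2$, so I may assume $G$ is connected. If $G$ has a pendant vertex $v$ with neighbor $c$, then $G-\{v,c\}$ is $K_4$-minor-free on $n-2$ vertices, so by induction it admits an indeque set $S_0$ with $|S_0|\ge(n-2)/2$; since $v$'s only neighbor $c$ is excluded from $S_0$, the set $S=S_0\cup\{v\}$ is indeque in $G$ with $v$ an isolated singleton, yielding $|S|\ge n/2$.

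We may thus assume $\delta(G)\ge 2$. Because $K_4$-minor-free graphs are $2$-degenerate, there is a vertex $v$ of degree exactly $2$; write $N(v)=\{u,w\}$ and split into Case~A, where $uw\in E(G)$ so $\{u,v,w\}$ is a triangle, and Case~B, where $uw\notin E(G)$. In Case~A I would apply the hypothesis to $G-\{u,v,w\}$ to obtain an indeque set $S_0$ with $|S_0|\ge(n-3)/2$, and then augment $S_0$ with an edge of the triangle, for example $\{u,v\}$ or $\{v,w\}$. Since $v$'s only neighbors are $u$ and $w$, the vertex $v$ has no adjacency to $S_0$; if at least one of $u,w$ also has no neighbor in $S_0$, the augmentation yields $|S|\ge(n+1)/2$. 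In Case~B I would apply the hypothesis to $G-v$ to obtain $S_0$ with $|S_0|\ge(n-1)/2$, and then add $v$ to $S_0$; this succeeds directly whenever the neighbors of $v$ in $S_0$ all lie in singleton cliques, or when $u,w\in S_0$ form together a $K_2$ clique that accepts $v$ as the third vertex of a triangle (only possible in Case~A).

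The main obstacle is the residual sub-cases in which $u$ or $w$ already lies in a clique of size two or three inside $S_0$, so that the natural ``add $v$ back'' step fails. To handle these, the plan is to perform a local swap: remove one or two vertices of the offending clique of $S_0$ and insert $v$ together with $u$ or $w$, producing a new $K_2$ or $K_3$ component without shrinking the set. The crucial structural ingredient is that in a $K_4$-minor-free graph every clique has size at most three, and moreover $u,w$ cannot be simultaneously adjacent to two vertices of a common triangle of $S_0$ without producing a $K_4$-minor; these restrictions tame the local configurations enough for the swap to work. If this direct analysis turns out to be too delicate, an alternative route is to strengthen the induction hypothesis to say that for any prescribed vertex $x\in V(G)$ one can find an indeque set of size at least $n/2$ with $x\notin S$ or $x$ a singleton component of $G[S]$; this extra flexibility propagates cleanly through the cut-vertex reduction and through the swap steps in Cases A and B.
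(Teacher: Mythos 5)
Your reductions (disjoint components, pendant vertices, existence of a degree-$2$ vertex via $2$-degeneracy) are fine, but the proof stops exactly where the theorem's difficulty begins, and the proposed fixes do not close the gap. In Case A with $n$ odd you have $|S_0|\ge (n-3)/2$ and must end with at least $(n+1)/2$ vertices, i.e.\ a net gain of two; in Case B with $n$ odd you have $|S_0|\ge (n-1)/2$ and must gain one. The announced ``local swap'' removes one or two vertices of an offending clique and inserts $v$ together with $u$ or $w$ ``without shrinking the set'', but a size-preserving swap is numerically insufficient in precisely these residual cases: e.g.\ in Case B, if both $u$ and $w$ already lie in nontrivial clique components of $S_0$ (or are both singletons, since $uw\notin E$), any exchange of equal size leaves you at $(n-1)/2<n/2$; in Case A, if $u$ is adjacent to vertices in two different clique components of $S_0$, the swap needed to legalize $\{u,v\}$ costs as much as it gains. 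The structural facts you cite (cliques have size at most three, $u,w$ cannot both attach to two vertices of a common triangle of $S_0$) are true but do not control the real obstruction, namely that the induction hypothesis gives you no say over how $S_0$ meets $N(u)\cup N(w)$. Since the bound is tight (e.g.\ $C_4$ and the triangle-rings), there is no slack to absorb a failed augmentation, so these sub-cases cannot be waved away. The fallback of strengthening the induction hypothesis is also only a suggestion: it is unproven, and prescribing a single vertex $x$ does not suffice, because the argument needs simultaneous control of both neighbors $u$ and $w$ of the degree-$2$ vertex.

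By contrast, the paper avoids the interface problem altogether: it takes a minimum counterexample, works inside a leaf-block (a series-parallel graph by Duffin's theorem), and always deletes a subgraph $X$ together with an indeque set $S\subseteq V(X)$, $|S|\ge \frac12|V(X)|$, chosen so that \emph{no} vertex of $S$ has a neighbor outside $X$ (a ``contra-pair''). Finding such pairs requires the structural classification of $0$-series- and $0$-parallel-pieces, triangle-strings, triangle-rings and kites around the cut-vertex, which is the actual content of the proof. Your outline replaces that analysis with an unverified local-exchange claim, so as it stands it is not a proof.
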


Before proceeding to prove Theorem~\ref{thm:lower}, we need some terminologies that will be used throughout the proof. Let $G$ be a $K_4$-minor-free graph.  
A {\em leaf-block} $H$ is block of $G$ sharing at most one vertex $v$ with other blocks of $G$, and the vertex $v$ is a cut-vertex separating $H-v$ and $G-H$ if it exists. 

Consider a leaf-block $H$ of a $K_4$-minor-free graph $G$. By Theorem~\ref{thm:parallel}, $H$ is a series-parallel graph. A maximal subgraph of $H$ constructed from $K_2$s by only series composition is called a {\em $0$-series-piece} of $H$. A $0$-series-piece of $H$ is a path of which every internal vertex has degree two in $G$. A maximal subgraph of $H$ with two terminals constructed from $0$-series-pieces by only parallel compositions is called a {\em $0$-parallel-piece}. A {\em $k$-series-piece} is a maximal subgraph constructed from $i$-parallel-pieces with $0\le i< k$ and $j$-series-pieces with $0\le j<k$ by only series compositions where at least one of these pieces is a $(k-1)$-parallel-piece. Then, a {\em $k$-parallel-piece} is a maximal subgraph constructed from $i$-series-pieces with $0\le i\le k$ and $j$-parallel-piece with $0\le j<k$ by only parallel compositions where at least one of these pieces is a $k$-series-piece. Note that a parallel-piece is 2-connected. For convenience, a $(k-1)$-parallel-piece is considered as a trivial $k$-series-piece, but a $k$-series-piece is not considered as a $k$-parallel-piece. A vertex of a $k$-series-piece or parallel-piece other than the two terminal vertices is called an {\em internal} vertex. 

\begin{lem}[Eppstein, \cite{DE}]\label{lem:terminal}
Let $H$ be a 2-connected series-parallel graph. Then, for any edge $st$, $H$ is a 2-terminal graph with $s$ and $t$ as terminal vertices.
\end{lem}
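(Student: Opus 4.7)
The plan is induction on $|V(H)|+|E(H)|$, performed in the slightly larger class of $2$-connected series-parallel multigraphs so that series- and parallel-reductions stay inside the class. The base case $H=K_2$, with $s,t$ as its two vertices, is immediate. For the inductive step I would invoke the classical structural fact (underlying Duffin's Theorem~\ref{thm:parallel}) that every $2$-connected SP multigraph with more than one edge either has a pair of parallel edges or a vertex of degree $2$; in the simple-graph setting on at least three vertices the former is vacuous and the latter is forced, because a $2$-connected graph of minimum degree $\ge 3$ contains a $K_4$ minor.

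In the routine subcases---namely, parallel edges not coinciding with $st$, or a degree-$2$ vertex $v\notin\{s,t\}$ with neighbors $u,w$---I apply the corresponding reduction (delete a redundant parallel copy, or delete $v$ and insert an edge $uw$) to obtain a smaller $2$-connected SP multigraph $H'$ still containing the designated edge $st$. Induction produces $H'$ as a $2$-terminal SP graph with terminals $s,t$, and reversing the reduction (reinserting the parallel edge, or replacing $uw$ by the series composition $uv\bullet vw$) realizes $H$ as a $2$-terminal SP graph with terminals $s,t$.

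The delicate case is when one of the terminals---say $s$---is itself the degree-$2$ vertex, so the edges at $s$ are $st$ and $sw$ for some $w$. I would perform the series reduction at $s$ to obtain a smaller $2$-connected SP multigraph $H'$ with a distinguished edge $wt$ (possibly a new parallel copy of an existing $wt$). By induction, $H'$ is a $2$-terminal SP graph with terminals $w,t$. Since $H'$ is $2$-connected and has more than one edge, the outermost composition of its SP construction must be a parallel composition; moreover the factor containing $wt$ cannot itself be a series composition, because the resulting cut vertex would separate $w$ and $t$, contradicting the presence of the direct edge $wt$. Flattening the construction therefore lets us place $wt$ as an outermost parallel factor, writing $H'=wt\|R$ for some $2$-terminal SP graph $R$ on terminals $w,t$. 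Substituting $wt$ by the series composition $ws\bullet st$ gives $H=(ws\bullet st)\|R$, which as an underlying graph is identical to $st\|(sw\bullet R)$, exhibiting $H$ as a $2$-terminal SP graph with terminals $s,t$.

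The main obstacle is this last rearrangement in the delicate case: one has to justify that the inductive $2$-terminal SP construction of $H'$ on terminals $w,t$ can always be brought into a form where the distinguished edge $wt$ sits at the outermost parallel level. Conceptually this is a re-rooting statement for the SP-decomposition tree and follows from the flexibility of the SP construction; once that point is granted, the explicit substitution above closes the induction.
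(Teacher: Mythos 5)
The paper does not actually prove this lemma at all; it is quoted as a known result and attributed to Eppstein~\cite{DE}, so there is no internal proof to match yours against. Your argument is a legitimate self-contained alternative: an induction over $2$-connected series-parallel multigraphs using the standard reductions (delete a redundant parallel edge, suppress a degree-$2$ vertex), with the only nontrivial point being the case where the suppressed degree-$2$ vertex is a terminal. Your treatment of that case is in fact complete: the outermost composition of a $2$-connected SP graph with more than one edge must be parallel, a series factor cannot contain an edge joining its two terminals (its two terminals lie in different sides of the series split), so after flattening nested parallel compositions the distinguished copy of $wt$ is an outermost parallel factor, and the rewriting $(ws\bullet st)\|R = st\|(sw\bullet R)$ closes the induction; the hedge in your last sentence (``once that point is granted'') is unnecessary, since you have already supplied the re-rooting argument. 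Two small loose ends should be tidied. First, your routine parallel case explicitly excludes parallel copies of the designated edge $st$, but such copies genuinely arise in the recursion (e.g.\ when a suppressed degree-$2$ vertex has neighbors exactly $s$ and $t$); the fix is the trivial subcase $H=H'\| st$ after deleting the extra copy, which also preserves $2$-connectivity since vertex connectivity depends only on the underlying simple graph. Second, in the delicate case one must insist $w\neq t$: if both edges at the degree-$2$ terminal $s$ go to $t$, suppressing $s$ would create a loop, so that situation must be routed through the parallel-$st$ subcase just mentioned. With these two sentences added, your proof is correct, and it is more elementary than invoking the literature as the paper does, at the modest cost of having to work in the multigraph closure of the class.
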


For any $0$-series-piece $P$ of $H$ with terminal vertices $s$ and $t$, let $H'$ be a graph obtained from $H$ by replacing $P$ by an edge $st$. Then $H'$ is a 2-connected series-parallel graph. By Lemma~\ref{lem:terminal}, $H'$ is a 2-terminal graph with $s$ and $t$ as terminal vertices. Then $H$ is obtained from $H'-e$ and $P$ by a parallel composition. So $H$ is a 2-terminal graph with $s$ and $t$ as terminal vertices. So we have the following corollary.

\begin{cor}\label{cor:terminal}
Let $H$ be a 2-connected series-parallel graph. Then for any $0$-series-piece $P$ with terminal vertices $s$ and $t$, $H$ is a $k$-parallel-piece with terminal vertices $s$ and $t$ for some integer $k$. 
\end{cor}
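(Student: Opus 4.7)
The plan is to follow the reduction already sketched in the paragraph preceding the corollary and then identify the resulting parallel decomposition with the recursive $k$-parallel-piece hierarchy. First I would replace the $0$-series-piece $P$ in $H$ by a single edge $e=st$, obtaining a graph $H'$. Since all internal vertices of $P$ have degree $2$ in $G$ (hence in $H$), this operation preserves both 2-connectivity and the series-parallel property, so $H'$ is a 2-connected series-parallel graph. Applying Lemma~\ref{lem:terminal} to $H'$ with the distinguished edge $e$, I obtain that $H'$ is a 2-terminal graph with terminals $s$ and $t$. Consequently $H=(H'-e)\,\|\,P$, exhibited as a parallel composition of the 2-terminal graphs $(H'-e,s,t)$ and $(P,s,t)$.

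The second step is to trace the recursive construction of $H'$ through the piece hierarchy so as to read off the level $k$. Since $H$ is 2-connected, the final operation in any construction of $(H,s,t)$ from $K_2$'s must be a parallel composition (a terminal series composition would leave a cut vertex at the identified terminal). Starting from the leaves of the construction tree, each $K_2$ is contained in some maximal $0$-series-piece; maximal parallel compositions of such pieces yield $0$-parallel-pieces; successive maximal series compositions produce $1$-series-pieces, then $1$-parallel-pieces, and so on. Because $H$ is finite, this alternation terminates, and the top-level parallel composition $H=(H'-e)\,\|\,P$ places $H$ at the level of a $k$-parallel-piece with terminals $s$ and $t$ for some finite $k$. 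Induction on $|E(H)|$ (shrinking $H'-e$ by peeling off one parallel factor at a time) makes this rigorous, with the base case being $H=P$ itself.

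The main obstacle is the bookkeeping around the \emph{maximality} clauses in the definitions. One has to check that in the decomposition above, each piece at each level is genuinely maximal with respect to its prescribed type of composition, and that the convention ``a $(k-1)$-parallel-piece is a trivial $k$-series-piece'' is invoked correctly whenever a constituent happens to be a pure parallel-piece rather than containing a bona fide $k$-series-piece. One also has to confirm that contracting $P$ to the edge $e$ does not disturb the $0$-series-piece structure elsewhere in $H$, so that the pieces of $H$ and $H'$ correspond cleanly outside of $P$. Once these verifications are in place, the corollary follows directly from the parallel composition $H=(H'-e)\,\|\,P$ and the 2-terminal construction of $H'$ supplied by Lemma~\ref{lem:terminal}.
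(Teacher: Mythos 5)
Your proposal is correct and follows essentially the same route as the paper: replace the $0$-series-piece $P$ by the edge $st$, apply Lemma~\ref{lem:terminal} to the resulting 2-connected series-parallel graph $H'$, and recover $H$ as the parallel composition of $(H'-e,s,t)$ and $(P,s,t)$. Your additional bookkeeping about identifying the level $k$ in the piece hierarchy only spells out what the paper leaves implicit.
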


In order to prove Theorem~\ref{thm:lower}, we consider a minimum counterexample $G$ with a leaf-block $H$. If $H$ has a subgraph $X$ that has an indeque set $S$ such that  $|S|\ge \frac 1 2 |V(X)|$ and every vertex of $S$ is not adjacent to any vertex in $G-X$, then any indeque set of $G-X$ together with $S$ form an indeque set of $G$. Note that $G-X$ is $K_4$-minor-free and has fewer vertices than $G$. As $G$ is a minimum counterexample, $G-X$ is not a counterexample and hence it has an indeque set $S'$ of size at least $\frac 1 2 |V(G-X)|$. So $S\cup S'$ is an indeque set of $G$. Therefore,
\[\idq(G)\ge |S\cup S'|=|S|+|S'|\ge \frac 1 2 |V(X)|+\frac 1 2 (|V(G)|-|V(X)|)=\frac 1 2 |V(G)|,\]
which leads a contradiction to that $G$ is a counterexample. Such a pair $(X, S)$ is called a {\em contra-pair} in the following. Hence, it suffices to show that $G$ always has a contra-pair.

\begin{figure}[!htb]
\begin{center}
\begin{tikzpicture}
    \draw [](0,1) -- (.5, .5)--(0,0)--(0,1); \filldraw[] (.5,.5) circle (2pt);
  \node at (0,1.3) {$s=v$ };
    \node at (0,-.3) {$t$ };
\node at (.3, -1) {$\Gamma_1$};

    \draw [](2,1) -- (2,0)--(2.5, .5)--(2,1);
    \filldraw[ ](0,0) circle (2pt);
    \filldraw[blue](0,1) circle (2pt);
    \filldraw[](2,0) circle (2pt);
    \filldraw[blue] (2.5,.5) circle (2pt);
    \filldraw[] (2,1) circle (2pt);
    \node at (2.8,0.5) {$v$};
    \node at (2,1.3) {$s$ };
    \node at (2,-.3) {$t$ };
    \node at (2, -1) {$\Gamma_2$};

    \begin{scope}[shift={(2cm,0)}]
      \draw [](2,1) -- (1.5,.5)--(2,0)--(2.5, .5)--(2,1);
      \draw[dashed] (2,1)--(2,0);
    \filldraw[ ](0,0) circle (2pt);
    \filldraw[](0,1) circle (2pt);
    \filldraw[](2,0) circle (2pt);
    \filldraw[](1.5,.5) circle (2pt);
    \filldraw[blue] (2.5,.5) circle (2pt);
    \filldraw[] (2,1) circle (2pt);
    \node at (2.8,0.5) {$v$};\node at (2,1.3) {$s$ };
    \node at (2,-.3) {$t$ };
\node at (2, -1) {$\Gamma_3$};
    \end{scope}

    \begin{scope}[shift={(4cm,0)}]
      \draw [](2,1) --(2,0)--(2.5, .2)--(2.5,.8)--(2,1) ;
    \filldraw[ ](0,0) circle (2pt);
    \filldraw[](0,1) circle (2pt);
    \filldraw[](2,0) circle (2pt);
     
    \filldraw[blue] (2.5,.8) circle (2pt);
    \filldraw[] (2.5,.2) circle (2pt);
    \filldraw[] (2,1) circle (2pt);
    \node at (2.8,0.8) {$v$};\node at (2,1.3) {$s$ };
    \node at (2,-.3) {$t$ };
\node at (2.3, -1) {$\Gamma_4$};
    \end{scope}

     \begin{scope}[shift={(6cm,0)}]
      \draw [](2,1) -- (1.5,.5)--(2,0)--(2.5, .2)--(2.5,.8)--(2,1);
      \draw[dashed] (2,1)--(2,0);
    \filldraw[ ](0,0) circle (2pt);
    \filldraw[](0,1) circle (2pt);
    \filldraw[](2,0) circle (2pt);
    \filldraw[](1.5,.5) circle (2pt);
    \filldraw[blue] (2.5,.8) circle (2pt);
    \filldraw[] (2.5,.2) circle (2pt);
    \filldraw[] (2,1) circle (2pt);
    \node at (2.8,0.8) {$v$};\node at (2,1.3) {$s$ };
    \node at (2,-.3) {$t$ };
\node at (2.3, -1) {$\Gamma_5$};
    \end{scope}
    


\begin{scope}[shift={(8cm,0)}]
      \draw [](2,1) --(2,0)--(2.5, .1)--(2.8,.5)--(2.5,.9)--(2,1) ;
      \filldraw[ ](0,0) circle (2pt);
    \filldraw[](0,1) circle (2pt);
    \filldraw[](2,0) circle (2pt);
     \filldraw[] (2.5,.9) circle (2pt);
    \filldraw[blue] (2.8,.5) circle (2pt);
    \filldraw[] (2.5,.1) circle (2pt);
    \filldraw[] (2,1) circle (2pt);
    \node at (3.1,0.5) {$v$};\node at (2,1.3) {$s$ };
    \node at (2,-.3) {$t$ };
\node at (2.3, -1) {$\Gamma_6$};
    \end{scope} 
    
\end{tikzpicture}
\caption{All possible 0-parallel-pieces with $v$ (edges in dashed lines may or may not be present).}\label{fig:0-parallel-piece}
\end{center}
\end{figure}
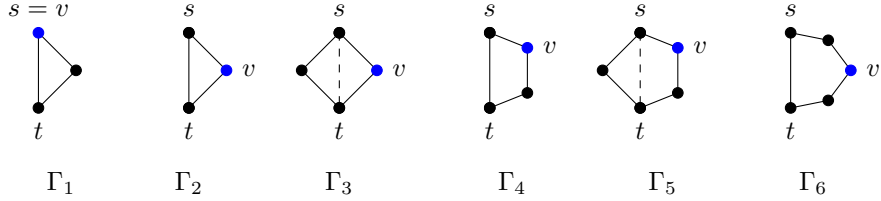

\begin{lem}\label{lem:0-piece}
Let $G$ be a minimum counterexample to Theorem~\ref{thm:lower}, and let $H$ be a leaf-block of $G$ with a cut-vertex $v$. Then:

{\upshape (1)} $G$ does not contain two adjacent vertices of degree two, and hence every $0$-series-piece of $H$ is a path with at most three vertices if it does not contain $v$ as an internal vertex.

{\upshape (2)} every $0$-parallel-piece of $H$ is either a triangle or one of the six graphs containing $v$ in Figure~\ref{fig:0-parallel-piece} up to the symmetry exchanging $s$ and $t$. 
\end{lem}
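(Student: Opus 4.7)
The approach relies on the contra-pair framework set up just above the lemma: since $G$ is a minimum counterexample, no subgraph $X \subseteq H$ admits an indeque set $S$ with $|S| \geq \frac{1}{2}|V(X)|$ whose vertices have no neighbors in $G-X$. I will derive each structural restriction by exhibiting exactly such a forbidden pair whenever the restriction is violated.

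For (1), suppose $u$ and $w$ are two adjacent vertices of degree two in $G$, with $N(u) = \{w, u'\}$ and $N(w) = \{u, w'\}$. Set $X = \{u, w, u', w'\}$ (which has three vertices if $u'=w'$) and $S = \{u, w\}$. Then $G[S]$ is a single edge $K_2$, so $S$ is indeque; the neighbors of $u$ and $w$ all lie in $X$; and $|S| = 2 \geq \frac{1}{2}|V(X)|$, a contra-pair. The consequence for $0$-series-pieces is immediate: the internal vertices of a $0$-series-piece not containing $v$ are non-cut-vertices of $G$ having degree two in $H$ and hence degree two in $G$, so by the first part no two of them can be adjacent, forcing at most one internal vertex and hence at most three vertices in the piece.

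For (2), let $Q$ be a $0$-parallel-piece with terminals $s$ and $t$, expressed as a parallel composition of $0$-series-pieces, each an internally disjoint $s$-$t$ path. By (1) each such path not containing $v$ as an internal vertex is either the edge $st$ or a length-two path $s$-$x$-$t$, and at most one of each kind can appear, since two length-two paths $s$-$x_1$-$t$ and $s$-$x_2$-$t$ with $x_i \neq v$ yield the contra-pair $X = \{s, t, x_1, x_2\}$, $S = \{x_1, x_2\}$. Hence if $v$ is not internal to any of these paths, in particular if $v \in \{s, t\}$, then $Q$ is at most a triangle, covering both the triangle case and $\Gamma_1$.

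Otherwise $v$ is internal to a unique $0$-series-piece $P_v$, and applying (1) inside $P_v$ shows that any non-$v$ internal vertex must be separated from the others by $v$, so $P_v$ is one of $s$-$v$-$t$, $s$-$y$-$v$-$t$ (up to the $s \leftrightarrow t$ swap), or $s$-$y_1$-$v$-$y_2$-$t$. I will then enumerate the parallel compositions of $P_v$ with at most one additional edge $st$ and at most one additional length-two path $s$-$x$-$t$, which together with the restrictions above yields exactly $\Gamma_2, \Gamma_3, \Gamma_4, \Gamma_5, \Gamma_6$. The crucial exclusion is that the longest shape $P_v = s$-$y_1$-$v$-$y_2$-$t$ cannot be combined in parallel with any length-two $s$-$x$-$t$: the set $S = \{y_1, y_2, x\}$ of three pairwise non-adjacent degree-two vertices of $G$ together with $X = \{s, t, y_1, v, y_2, x\}$ forms a contra-pair of ratio exactly $\frac{1}{2}$. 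The main obstacle is this final case analysis; in particular one must check that the terminals $s, t$ cannot be placed in $S$, since they may have further neighbors elsewhere in $H$, so only the non-$v$ internal vertices of $Q$ are available for $S$, and the inequality $|S| \geq \frac{1}{2}|V(X)|$ must be verified in each surviving configuration.
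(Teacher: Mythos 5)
Your proposal is correct and follows essentially the same route as the paper: both parts are proved by exhibiting contra-pairs whose indeque sets consist of the degree-two internal vertices (plus, where needed, excluding $s$, $t$ and $v$ because they may have neighbors outside the piece). The only organizational difference is that the paper bounds $|V(Q)|$ by counting $|S|=|V(Q)|-2$ or $|V(Q)|-3$ against $\tfrac12|V(Q)|$ (using $(Q-v,S)$ when $v$ is adjacent to both terminals) and then enumerates, whereas you enumerate the constituent $s$--$t$ paths directly and kill the excess configurations with the same targeted contra-pairs; the two case analyses land on exactly the same list $\Gamma_1,\dots,\Gamma_6$.
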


\begin{proof}
Let $G$ be a $K_4$-minor-free graph with $\idq(G)<\frac 1 2 |V(G)|$ such that $|V(G)|$ is minimum, and let $H$ be a leaf-block with a cut-vertex $v$. \medskip 

(1) Suppose to the contrary that $G$ has two adjacent degree-2 vertices $v_1$ and $v_2$ different from $v$, and let $P:=v_3v_1v_2v_4$ (which could be a triangle). So $|V(P)|\le 4$. Then let $S=\{v_1,v_2\}$. So $S$ is an indeque set of $P$ and both $v_1$ and $v_2$ are not adjacent to any vertex of $G-P$. Hence $(P,S)$ is a desired contra-pair, a contradiction. So (1) holds. \medskip

(2) By Theorem~\ref{thm:parallel}, $H$ is a series-parallel graph. Note that all vertices of $H-v$ are not adjacent to any vertex of $G-H$.

Let $Q$ be a $0$-parallel-piece of $H$ with two terminals $s$ and $t$. Let $P$ be a $0$-series-piece in $Q$. Then every internal vertex of $P$ except $v$ has degree two in $G$. If $P$ does not contain $v$ as an internal vertex, then it follows from (1) that $|V(P)|\le 3$. If $P$ does contain $v$ as an internal vertex, then $|V(P)|\le 5$ and $|V(P)|=5$ if and only if $v$ is the middle vertex of $P$ by (1). So $Q-\{s,t, v\}$ is an independent set of $G$, denoted by $S$. So $(Q, S)$ is a contra-pair if $|S|\ge \frac 1 2 |V(Q)|$, which yields a contradiction. So $|S|<\frac 1 2 |V(Q)|$. 

If $v\in \{s,t\}$, then $|S|=|V(Q)|-2$. Therefore, $|V(Q)|-2=|S|<\frac 1 2 |V(Q)|$ which implies that $|V(Q)|\le 3$. Hence $Q$ is $\Gamma_1$. 

If $v\notin \{s,t\}$, then $|S|=|V(Q)|-3$. Therefore, $|V(Q)|-3<\frac 1 2 |V(Q)|$ which implies that $|V(Q)|\le 5$. If $v$ is not adjacent to $s$ or $t$, then $v$ is the middle vertex of a $0$-series-piece and $Q$ is $\Gamma_6$ in Figure~\ref{fig:0-parallel-piece} . If $v$ is adjacent to exactly one of $s$ and $t$, without loss of generality, assume $v$ is adjacent to $s$. It follows from (1) that $Q$ is either $\Gamma_4$ or $\Gamma_5$ in Figure~\ref{fig:0-parallel-piece}.  If $v$ is adjacent to both $s$ and $t$, then $v$ is not adjacent to any vertex of $S$. So consider $(Q-v, S)$ which is a contra-pair if $|S|\ge \frac 1 2 (|V(Q)|-1)$. Hence $|V(Q)|-3=|S|<\frac 1 2 (|V(Q)|-1)$ which implies that $|V(Q)|\le 4$. Therefore, $Q$ is either $\Gamma_2$ or $ \Gamma_3 $ in Figure~\ref{fig:0-parallel-piece}. So (2) follows. This completes the proof of the lemma. 
\end{proof}

A {\em triangle-string} is a series-parallel graph that can be constructed from a triangle-string by recursively performing one of the following two operations: (1) a parallel composition with a $K_2$, and (2) a series composition with a triangle.  
A {\em triangle-ring} is obtained from two triangle-strings by a parallel composition. 

Note that all $0$-parallel-pieces of  a  triangle-string or a triangle-ring are triangles. If one of the triangle of a triangle-string or a riangle-ring is replaced by some $\Gamma_i$ in Figure~\ref{fig:0-parallel-piece}, the resulting series-parallel subgraph is called a $\Gamma_i$-triangle-string or a $\Gamma_i$-triangle-ring.

\begin{figure}[!htb]
\begin{center}
    \begin{tikzpicture}
    \draw [](-2,0)--(-1.5,0.5)--(-1,0)--(-2,0);
    \draw [](-1,0)--(-0.5,0.5)--(0,0)--(-1,0);
    \draw [](0,0)--(0.5,0.5)--(1,0)--(0,0);
    \draw [](1,0)--(1.5,0.5)--(2,0)--(1,0);
    \draw [](2,0)--(2.5,0.5)--(3,0)--(2,0);
    \draw [](0,0) arc(-60:-120:2);
    \draw [](2,0) arc(-60:-120:4);
    \filldraw[](-2,0) circle (2pt);
    \filldraw[](-1,0) circle (2pt);
    \filldraw[](0,0) circle (2pt);
    \filldraw[](1,0) circle (2pt);
    \filldraw[](2,0) circle (2pt);
    \filldraw[](3,0) circle (2pt);
    \filldraw[](-1.5,0.5) circle (2pt);
    \filldraw[](-0.5,0.5) circle (2pt);
    \filldraw[](0.5,0.5) circle (2pt);
    \filldraw[](1.5,0.5) circle (2pt);
    \filldraw[](2.5,0.5) circle (2pt);

\begin{scope}[shift={(8,0)}]

   \draw [](-2,0)--(-1.5,0.5)--(-1,0)--(-2,0)--(-3,0);
    \draw [](-1,0)--(-0.5,0.5)--(0,0)--(-1,0);
    \draw [](0,0)--(0.5,0.5)--(1,0)--(0,0);
    \draw [](1,0)--(1.5,0.5)--(2,0)--(1,0);
    \draw [](2,0)--(2.5,0.5)--(3,0)--(2,0);
    \draw [](0,0) arc(-60:-120:2);
    \draw [](2,0) arc(-60:-120:4);
    \filldraw[](-2,0) circle (2pt);
    \filldraw[](-1,0) circle (2pt);
    \filldraw[](0,0) circle (2pt);
    \filldraw[](1,0) circle (2pt);
    \filldraw[](2,0) circle (2pt);
    \filldraw[](3,0) circle (2pt);
    \filldraw[](-3,0) circle (2pt);
    \filldraw[](-1.5,0.5) circle (2pt);
    \filldraw[](-0.5,0.5) circle (2pt);
    \filldraw[](0.5,0.5) circle (2pt);
    \filldraw[](1.5,0.5) circle (2pt);
    \filldraw[](2.5,0.5) circle (2pt);
 
    \node at (-3.5,0) {$s$};
    \node at (3.5,0) {$t$ };
   \node at (-2,.3) {$x$ };

     
   

\end{scope}
   
\end{tikzpicture}
\caption{A triangle-string (Left) and a kite (Right).}\label{fig2}
\end{center}
\end{figure}
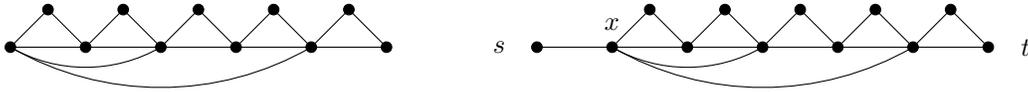

A {\em kite}  (or {\em $\Gamma_i$-kite}) is a series-parallel graph constructed from a $0$-series-piece $P$ (that is a path) and a  triangle-string ( or a  $\Gamma_i$-triangle-string). The terminal vertex of $P$ identified with a terminal vertex of the triangle-string (resp. $\Gamma_i$-triangle-string) is called the {\em joint vertex} of the kite (or $\Gamma_i$-kite). For example, see Figure~\ref{fig2} and $x$ is the joint vertex. 
  
In the following, we are going to prove Theorem~\ref{thm:lower}.

\medskip
\noindent{\em \bf Proof of Theorem~\ref{thm:lower}.} 
    Suppose to the contrary that the statement does not hold and let $G$ be a counterexample with the minimum number of vertices. In the following, it suffices to prove that $G$ has a contra-pair, which leads to a contradiction.

If $G$ has a vertex $u$ of degree one, let $w$ be the unique neighbor of $u$. Then let $X=uv$ and $S=\{u\}$, and $(X, S)$ is a desired contra-pair.
 So assume that $G$ has no vertex of degree one. It follows that a vertex of $G$ is either a cut-vertex or belongs to a block. Let $H$ be a leaf-block of $G$.

\medskip
\noindent{\bf Claim~1.} {\sl $H$ does not contain a triangle-ring or a $\Gamma_1$-triangle-ring.}
\medskip

\noindent{\em Proof of Claim~1.} If not, let $R$ be a triangle-ring or a $\Gamma_1$-triangle-ring. Let $S$ be the set of all degree-2 vertices of $R$. Then  $S$ is an independent set of $G$ and $|S|=\frac 1 2 |V(R)|$. Hence $(R, S)$ is a desired contra-pair, a contradiction. 
So the claim holds. 
\medskip

According to Claim~1, a parallel composition cannot be performed on two triangle-strings that do not include $v$ as an internal degree-2 vertex. By a similar argument and (1) of Lemma~\ref{lem:0-piece}, a parallel composition also cannot be applied to a triangle-string and a $0$-series-piece that does not include $v$ as an internal degree-2 vertex.

\medskip
\noindent{\bf Claim~2.} {\sl $H$ does not contain a kite that does not contain $v$ as an internal degree-2 vertex or its joint vertex.}
\medskip

\noindent{\em Proof of Claim~2.} If not, let $K$ be such a kite with $x$ as its joint vertex. If $K$ contains $v$, then $v$ is either the terminal vertex of $K$ or internal vertex of degree at least three. So $K$ may contain $\Gamma_1$ (or $\Gamma_1$-kite). Assume $K$ is formed by a $0$-series-piece $P$ and a triangle-string or $\Gamma_1$-triangle-string. Without loss of generality, assume $s$ is the terminal vertex of $K$ in $P$. By (1) of Lemma~\ref{lem:0-piece},  $P$ has at most three vertices. Let $K'=K-s$ if $P$ has three vertices and $K'=K$ if $P$ has exactly two vertices. 
Let $S:=\{\mbox{all degree-2 vertices of } K' \mbox{ except terminal vertices} \}\cup \{x\}$. Then $|S|=\frac 1 2 |V(K')|$, and all vertices of $S$ have no neighbors in $G-K'$. Therefore, $(K', S)$ is a desired contra-pair, a contradiction. 
Hence Claim~2 holds. 
\medskip

By Claim~2, a series composition cannot be performed on a triangle-string and a $0$-series-piece.  Let $H'$ be a $k$-series-piece or a $k$-parallel-piece not containing $v$ as an internal vertex. It follows from Lemma~\ref{lem:0-piece} that every $0$-parallel-piece of $H'$ is a triangle. By Claim~2, a series composition of $H'$ can only be applied to two triangle-strings, and by Claim~1, a parallel composition of $H'$ can only be applied to a $K_2$ and a triangle-string. Therefore, the following holds, \medskip
\begin{description}
\item ($*$) {\sl every $k$-series piece with $k\ge 1$ and every $k$-parallel piece with $k\ge 0$ not containing $v$ as an internal vertex is a triangle-string. }
 \end{description}
\medskip

First, assume that $v$ is contained in a $0$-parallel-piece $Q$ with terminal vertices $s$ and $t$. Then it follows from (2) of Lemma~\ref{lem:0-piece} that  $Q\in \{\Gamma_i | i\in [6]\}$. 
By Corollary~\ref{cor:terminal}, $H$ is a 2-terminal graph with terminal vertices $s$ and $t$. Then either $H=Q$ or $H$ is obtained from $Q$ and $H'$ by a parallel composition where $H':=H-(Q-\{s,t\})$.

If $H=Q$, then $H=\Gamma_i$ for some $i$. It is easy to check that $H$ has an indeque set $S\subseteq V(H-v)$ with size $|S|\ge \frac 1 2 |V(H)|$. Hence $(H,S)$ is a desired contra-pair, a contradiction. So in the following, assume that $H\ne Q$ and hence it is obtained from $Q$ and $H'$ by a parallel composition. Note that $H'$ is not a $0$-series-piece because $H\ne Q$ (by the maximality of $0$-parallel-piece). So $H'$ is either a $k$-parallel-piece with $k\ge 0$ or a $k$-series-piece with $k\ge 1$. Since $v\notin H'$, it follows from ($*$) that $H'$ is a triangle-string. Therefore, $H$ is a $\Gamma_i$-triangle-ring. By Claim~1, $i\ne 1$.

If $H$ is a $\Gamma_2$-triangle-ring, let the two neighbors of $v$ in $H$ be $v_1$ and $v_2$. Without loss of generality  assume $v_2$ is not a terminal vertex of $H$. Let $S=\{\mbox{all degree-2 vertices of } H\}\cup \{v_2\}$. Then $S$ is an indeque set of $H$ which are not adjacent to any vertices of $G-H$, and $|S|= \frac 1 2 |V(H)|$. So $(H,S)$ is a desired contra-pair. 

If $H$ is a $\Gamma_3$-triangle-ring. Let $S=\{\mbox{all degree-2 vertices of }H\}-\{v\}$ and then $ |S|= \frac 1 2 (|V(H)|-1)$.  Then $S$ is an independent set of $G$. So $(H-v, S)$ is a desired contra-pair. 

If $H$ is a $\Gamma_4$-triangle-ring, let $v_1$ be the neighbor of $v$ in $H$ that is a terminal vertex of $\Gamma_4$. Then all neighbors of $v_1$ belong to $H$. Let $S=\{\mbox{all degree-2 vertices of } H\}\cup \{v_1\}-\{v\}$, and $|S|= \frac 1 2 (|V(H)|+1)\ge \frac 1 2 |V(H)|$. Hence $(H,S)$ is a desired contra-pair. 

Now assume that $H$ is a $\Gamma_i$-triangle-ring for $i\in \{5, 6 \}$,  let 
$S=  \{\mbox{ all degree-2 vertices of } H \}-\{v\} $. Then $S$ is an independent set of $G$ and has size $|S|\ge \frac 1 2 |V(H)|$. Therefore, $(H,S)$ is a desired contra-pair, a contradiction. Hence $i\notin \{5,6\}$.  This contradiction implies that $v$ does not belong to a $0$-parallel-piece.  \medskip

So the smallest integer $k$ that a $k$-parallel-piece containing $v$ is at least 1. Assume that $v$ is contained in a $0$-series-piece $P$ with terminal vertices $s$ and $t$. Then $P$ has at most five vertices by (1) of Lemma~\ref{lem:0-piece}. By Corollary~\ref{cor:terminal}, $H$ is constructed from $P$ and a 2-terminal graph $H'$. Since $v$ is not contained in a $0$-parallel-piece, $H'$ is not a $0$-series-piece. 
It follows from ($*$) that $H'$ is a triangle-string. If $P$ has at least three vertices, then $H$ contains a kite or $\Gamma_1$-kite as described in Claim~2, a contradiction. So $P$ has exactly two vertices $s$ and $t$ and one of them is $v$.  Without loss of generality, assume $v=s$. 
Let $S=\{\mbox{all degree-2 vertices of }H\}\cup \{t\}$. Then $S$ is an indeque set of $G$ and every vertex of $S$ is not adjacent to a vertex of $G-H$. Note that $|S|\ge \frac 1 2 |V(H)|$. Hence $(H, S)$ is a desired contra-pair. This completes the proof of Theorem~\ref{thm:lower}
\qed


\medskip 

It follows from Theorem~\ref{thm:lower} that the indeque ratio of the class of $K_4$-minor-free graphs satisfies $\idq(\mathcal K) \ge \frac{1}{2}$. On the other hand, there are infinitely many $K_4$-minor-free graphs having indeque ratio $1/2$: for example, the 4-cycle and the triangle-rings etc. 
So $\idq(\mathcal K)\le \frac 1 2$. Therefore, $\idq(\mathcal K)=\frac 1 2$, and Theorem~\ref{thm:main} holds.

\section{Concluding Remarks}

A {\em subcubic graph} is a graph with maximum degree at most three. Hsieh, Le, Le and Peng~\cite{HLLP} recently show that it is NP-complete to determine the indeque number of subcubic planar graphs (Cluster Vertex-Deletion or 2-Claw Vertex-Deletion Problem, see Theorem~2 in \cite{HLLP}). It is also NP-complete to determine the inducing matching number of subcubic planar graphs~\cite{HOV}. In~\cite{KMM}, Kang, Mnich and M\"uller provided a polynomial-time algorithm to find an induced matching of size at least $m/9$ in a subcubic planar graph with $m$ edges.  

The following result gives a sharp lower bound for indeque number of subcubic graphs, and consequently  the indeque ratio of  subcubic graphs.

\begin{thm}
Let $G$ be an $n$-vertex subcubic graph. Then 
\[\idq(G)\ge \frac n 2,\]
and the bound is sharp. 
\end{thm}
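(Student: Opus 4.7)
The plan is to produce, in any subcubic graph $G$, an indeque set of size at least $n/2$ via a ``defective $2$-coloring'' of $V(G)$. Concretely, I will partition $V(G) = V_1 \cup V_2$ so that $\Delta(G[V_i]) \le 1$ for each $i$. Once this partition is in hand, every component of $G[V_i]$ is either a single vertex ($K_1$) or an edge ($K_2$), both of which are cliques; hence $V_i$ is itself an indeque set of $G$. Choosing $i$ with $|V_i| \ge n/2$ then yields $\idq(G) \ge n/2$ immediately, and no further case analysis (block structure, series--parallel decomposition, etc.) is needed.

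The partition is produced by a short local-search argument (equivalently, this is a special case of a classical theorem of Lov\'asz on defective colorings). Start with an arbitrary partition $V(G) = V_1 \cup V_2$ and, while some vertex $v$ has at least two neighbors inside its own class $V_i$, move $v$ to $V_{3-i}$. Since $\deg_G(v) \le 3$, the vertex $v$ has at most $3 - 2 = 1$ neighbor in $V_{3-i}$ before the move, so after the move $v$ has at most one neighbor in its new class. Using the monovariant $\phi := e(G[V_1]) + e(G[V_2])$, each such move destroys at least two monochromatic edges (incident to $v$ in $V_i$) and creates at most one (incident to $v$ in $V_{3-i}$), so $\phi$ strictly decreases by at least $1$. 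As $\phi$ is a nonnegative integer, the procedure terminates, and at termination every vertex has at most one neighbor in its own class, i.e., $\Delta(G[V_i]) \le 1$ for $i = 1, 2$, as required.

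For the sharpness claim, the $4$-cycle $C_4$ is subcubic and satisfies $\idq(C_4) = 2 = |V(C_4)|/2$; taking disjoint unions of copies of $C_4$ gives infinitely many subcubic graphs attaining the bound, so the constant $\tfrac{1}{2}$ cannot be improved (and this also implies $\idq(\text{subcubic}) = \tfrac{1}{2}$). The only step requiring genuine verification is the strict monotonicity of $\phi$, but this follows immediately from the subcubic hypothesis; everything else is bookkeeping. Because the whole argument is just one bipartition, there is no ``main obstacle'' of the type encountered in the $K_4$-minor-free case---the difficulty in the subcubic setting is entirely absorbed into the cheap global degree bound $\Delta(G) \le 3$.
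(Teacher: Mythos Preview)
Your proof is correct and is essentially the same argument as the paper's: the paper takes a partition $(X,Y)$ maximizing $|E(X,Y)|$ (equivalently minimizing your monovariant $\phi$) and observes that at the extremum every vertex has at most one neighbor in its own part, whence both parts are indeque sets. Your sharpness example of disjoint $C_4$'s is slightly different from the paper's $2$-connected ``cycle of squares'' construction, but both are valid witnesses to tightness.
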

\begin{proof}
    Let $G$ be a subcubic graph. Let $(X,Y)$ be a vertex partition of $G$ that maximizes the size of $E(X,Y)$, the set of edges joining a vertex of $X$ and a vertex of $Y$. If one part of $(X,Y)$, say $X$, has a vertex $v$ with degree at least two neighbors in its own part $X$. Then the partition $(X\backslash \{v\}, Y\cup \{v\})$ is a new vertex partition and 
    \[|E(X\backslash \{v\}, Y\cup \{v\})|>|E(X,Y)|,\] which contradicts that the maximality of $|E(X,Y)|$. Therefore, every vertex of $X$ or $Y$ has at most one neighbor in its own part, which means the vertex-induced subgraphs $G[X]$ and $G[Y]$ have maximum degree at most one. 
    Hence both $X$ and $Y$ are indeque sets of $G$. Note that $|X|+|Y|=|V(G)|=n$. It follows 
    \[\idq(G)\ge \max\{|X|, |Y|\}\ge \frac n 2.\]
    This completes the proof of the first part.
    
    To see the sharpness, there are 2-connected subcubic graphs with $4k$ vertices having indeque number $2k$: consider the subcubic graphs constructed from a cycle by blowing up every vertex to a square. It is quite easy to see that the graph has an indeque set of size $2k$. The graph does not have an indeque set of size at least $2k+1$. Otherwise, one of the 4-cycles contains at least three vertices from the indeque set, which is impossible because a 4-cycle has an indeque number two. 
\end{proof}

It follows immediately that the indeque ratio of all subcubic graphs is $1/2$. 

\begin{thm}
Let $\mathcal C$ be the class of all subcubic graphs. Then the indeque ratio of $\mathcal C$ is 
\[\idq(\mathcal C)=\frac 1 2.\]
\end{thm}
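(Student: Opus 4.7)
The plan is to deduce this theorem as an immediate consequence of the preceding theorem, which already packages both directions of the estimate. First I would note that the definition of $\idq(\mathcal C)$ requires two things: a uniform lower bound on $\idq(G)/|V(G)|$ valid for every $G \in \mathcal C$, and a matching upper bound witnessed by an infinite family whose ratios accumulate at the target value.

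For the lower bound, I would simply invoke the preceding theorem: every $n$-vertex subcubic graph $G$ satisfies $\idq(G) \ge n/2$, so $\idq(\mathcal C, n) \ge n/2$ for all $n$. Dividing by $n$ and taking the limit yields $\idq(\mathcal C) \ge \tfrac{1}{2}$. For the matching upper bound, I would use the sharpness construction from the proof of the preceding theorem: the family $\{G_k\}_{k \ge 3}$ obtained from a $k$-cycle by blowing up each vertex into a $4$-cycle is a family of $2$-connected subcubic graphs with $|V(G_k)| = 4k$ and $\idq(G_k) = 2k$. This forces $\idq(\mathcal C, 4k) \le 2k$, so $\idq(\mathcal C, n)/n$ has a subsequence converging to $\tfrac{1}{2}$ from above, giving $\idq(\mathcal C) \le \tfrac{1}{2}$. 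Combining the two inequalities closes the argument.

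The only subtle point is verifying that the limit in the definition of $\idq(\mathcal C)$ actually exists and equals $\tfrac{1}{2}$, rather than merely having $\tfrac{1}{2}$ as a limit point. Since the lower bound $\idq(\mathcal C, n)/n \ge \tfrac{1}{2}$ holds for every $n$, and the construction shows there exist arbitrarily large $n$ (namely $n = 4k$) with $\idq(\mathcal C, n)/n \le \tfrac{1}{2}$, we obtain $\idq(\mathcal C, n)/n = \tfrac{1}{2}$ for all $n$ of the form $4k$, and $\idq(\mathcal C, n)/n \to \tfrac{1}{2}$. Thus there is no real obstacle; the main content of the theorem is entirely carried by the preceding lemma and its sharpness example, and the present theorem is essentially a restatement in the language of the ratio.
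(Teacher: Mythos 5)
Your argument is correct and is essentially the paper's own: the theorem is stated there as an immediate consequence of the preceding lower-bound theorem together with its sharpness construction (the cycle with each vertex blown up to a $4$-cycle). The only loose end, which the paper also leaves implicit, is handling $n$ not divisible by $4$ so the limit exists over all $n$; this is trivially fixed by padding the $4k$-vertex example with at most three extra vertices (e.g.\ a short path or isolated vertices), which changes $\idq$ and $n$ by $O(1)$ and so does not affect the limit $\tfrac12$.
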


Note that the 3-dimensional hypercube $Q_3$ has indeque number 4. As noted by Biro, Collado and Zamora~\cite{BCZ}, the indeque ratio of any class $\mathcal G$ of graphs is closed under disjoint union.  Hence the indeque ratio for cubic graphs is also $1/2$.  

It was conjectured by Biro, Collado and Zamora~\cite{BCZ} that the indeque ratio of plane graphs is $2/5$. Note that, $K_4$-minor-free graphs, subcubic graphs and plane graphs have small maximum average degree. It is interesting to study the family of graphs with bounded maximum average degree. 



\end{document}